\providecommand{\U}[1]{\protect\rule{.1in}{.1in}}
\newtheorem{theorem}{Theorem}
\theoremstyle{plain}
\newtheorem{lemma}{Lemma}
\newtheorem{proposition}{Proposition}
\newtheorem{remark}{Remark}
\numberwithin{equation}{section}
\begin{document}
\title{Counting fixed points free vector fields on $\mathbb{B}^{2}$}
\author{Simeon Stefanov}
\address{University of Architecture, Civil Engineering and Geodesy (UACEG), Sofia 1046,
1~Hr. Smirnenski Blvd.}
\email{sim.stef.ri@gmail.com}
\subjclass[2010]{Primary 57M50; Secondary 05A15}
\keywords{vector field, diagram, Catalan number, OEIS}

\begin{abstract}
The purpose of this elementary note is to count stationary points free general
position vector fields on the unit disk $\mathbb{B}^{2}$, depending on the
number of touch points between the flow lines and the boundary. Of course, we
are counting the number of classes under a natural equivalence relation on
such fields. It turns out that the number of such objects (diagrams) may be
calculated via some formula involving Catalan numbers, moreover, the
corresponding sequence may be identified as sequence A275607 from OEIS (the
On-line Encyclopedia of Integer Sequences). Catalan numbers appear while
counting various objects of geometric nature. Finally, an algorithm for
finding all such diagrams under consideration, is proposed. It is based on
enumerating all independent vertex sets of some highly symmetric graph of
geometric origin.

\end{abstract}
\maketitle

\section{Preliminary}

A generic (smooth) vector field without stationary points in the disk
$\mathbb{B}^{2}$ may be represented by a simple diagram as depicted at
\hyperref[f1]{Fig.~\ref*{f1}}. Here we are identifying the vector field with
the induced flow on the disk, so we shall classify fields by investigating the
corresponding flow diagrams.

\begin{center}
\begin{figure}[ptb]
\includegraphics[width=90mm]{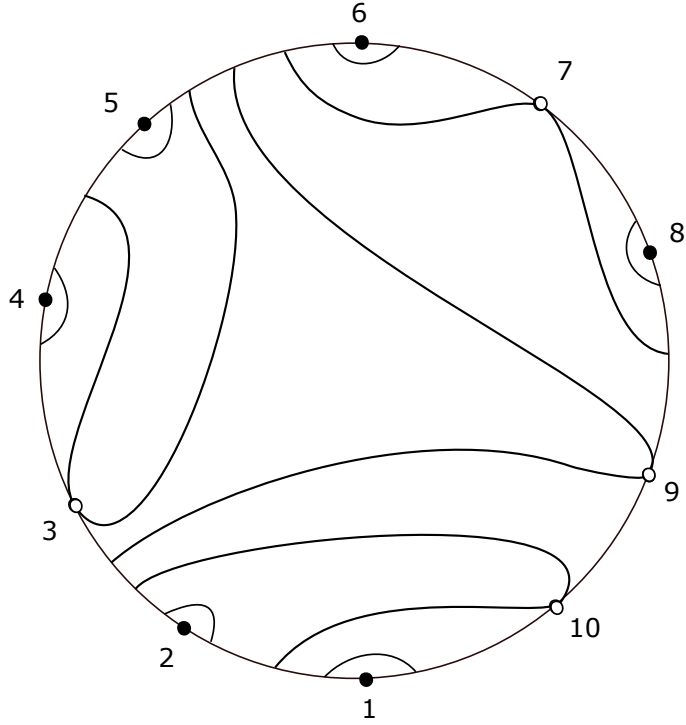}\caption{A generic fixed points free
vector field in $\mathbb{B}^{2}$.}%
\label{f1}%
\end{figure}
\end{center}

Let us start with some simple observations about general position fields in
$\mathbb{B}^{2}$:

a) A flow line is touching the boundary $\mathbb{S}^{1}$ in at most 1 point.

b) The number of touch points between all the flow lines and the boundary is finite.

c) Suppose there are $m$ touch points between the flow lines and
$\mathbb{S}^{1}$, yet there exist $m+2$ other exceptional points on the
boundary where the flow lines diagram is formed locally by \textquotedblleft
semi-circles\textquotedblright. We shall refer to latter points as
$a$\textit{-nodes}, while the touch points between level lines and the
boundary will be referred to as $b$\textit{-nodes}. For example, at
\hyperref[f1]{Fig.~\ref*{f1}} the total number of nodes is 10 and \ we have 6
$a$-nodes (1,2,4,5,6,8) and 4 $b$-nodes (3,7,9,10). (In some sense, $a$-nodes
may be thought of as \textquotedblleft sources\textquotedblright, while
$b$-nodes - as \textquotedblleft sinks\textquotedblright.) Now, the main
observation is that%
\[
\#(a\text{-nodes})-\#(b\text{-nodes})=2\text{.}%
\]

This equality becomes evident after drawing several examples by hand, but also
may be proved rigorously either combinatorially, or by counting the degree of
the vector field along the boundary $\mathbb{S}^{1}$ (it should equal 0, since
the field is stationary points free). Note that the total number of nodes
$2m+2$ is even.

In view of the above remarks, we may now settle the frame for counting fields.
We have some fixed set $A$ of $n$ points on the circle $\mathbb{S}^{1}$, where
$n$ is even%
\[
A=\left\{  p_{1},\dots,p_{n}\right\}  .
\]

We shall call points $p_{i}$ \textit{nodes}. (Without loss of generality we
may suppose $p_{i}$ to be the vertices of a fixed regular $n$-gon.) Then we
are considering the class $\mathcal{V}_{n}$ of vector fields such that any
field $v\in\mathcal{V}_{n}$ has $\frac{n}{2}+1$ $a$-nodes and $\frac{n}{2}-1$
$b$-nodes, all of them lying in set $A$. Now we define some natural
equivalence relation in $\mathcal{V}_{n}$:

$v_{1}\sim v_{2}$ if there is an orientation preserving diffeomorphism
$\varphi:\mathbb{B}^{2}\rightarrow\mathbb{B}^{2}$ such that $\varphi|_{A}=id$
(the nodes are fixed) and $\varphi$ is transforming the flow lines of $v_{1}$
onto those of $v_{2}$.

Let $D_{n}=\widetilde{\mathcal{V}}_{n}$ be the corresponding quotient set.\ It
is clear that the equivalence classes are finite in number and may be
visualized by simple diagrams. Set%
\[
T_{n}=\left\vert D_{n}\right\vert ,
\]

so, $T_{n}$ is the number of diagrams with $n$ nodes.

The main purpose of this article is to find the numbers $T_{n}$. We shall
prove that
\begin{equation}
T_{n}=3^{\frac{n}{2}-2}\left(  C_{\frac{n}{2}}+2C_{\frac{n}{2}-1}\right)  ,
\label{0}%
\end{equation}

where $C_{m}$ is the $m$-th Catalan number (cf. \cite{b1}, \cite{b2},
\cite{b3})
\[
C_{m}=\frac{(2m)!}{(m+1)!m!}\text{ .}%
\]
Setting $n=2k$, we get a more compact formula%
\begin{equation}
T_{2k}=3^{k-2}\left(  C_{k}+2C_{k-1}\right)  . \label{1}%
\end{equation}

Taking into account that the first Catalan numbers for $m=0,1,2,\dots$ are%

\[
\mathbf{1,1,2,5,14,42,132,429,\dots,}%
\]

it is not difficult to calculate the first terms of sequence $T_{2k}$ for
$k\geq1$:%
\[
\mathbf{1,4,27,216,1890,17496,168399,\dots}\text{,}%
\]

\begin{center}
\begin{figure}[ptb]
\includegraphics[width=90mm]{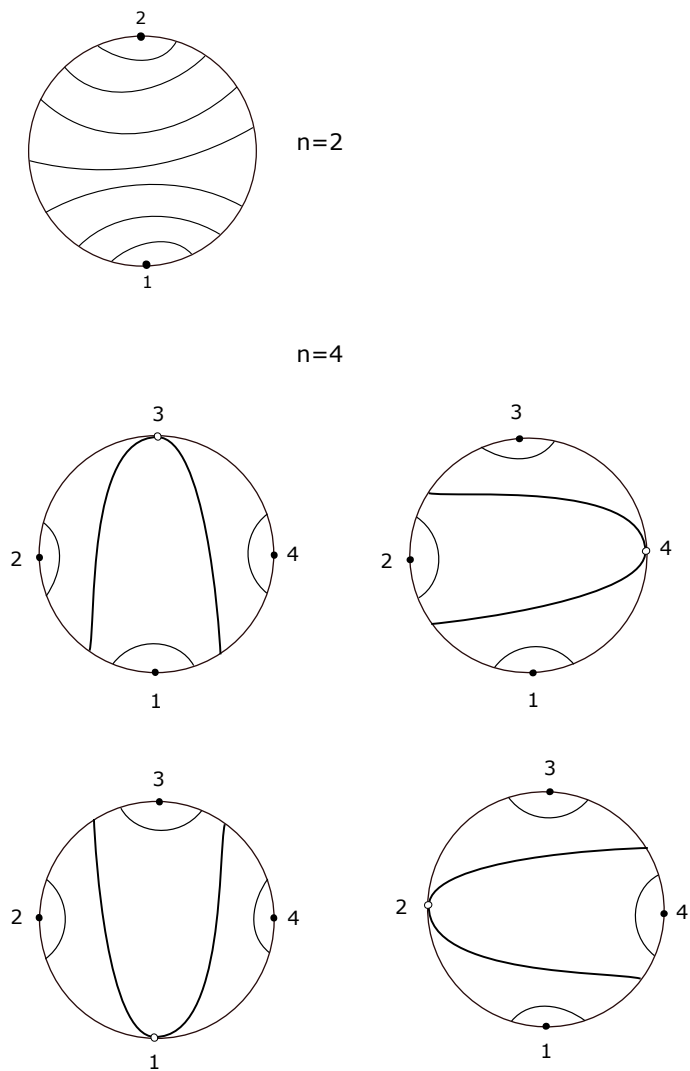}\caption{$T_{2}=1$, $T_{4}=4$.}%
\label{f2}%
\end{figure}

\begin{figure}[ptb]
\includegraphics[width=110mm]{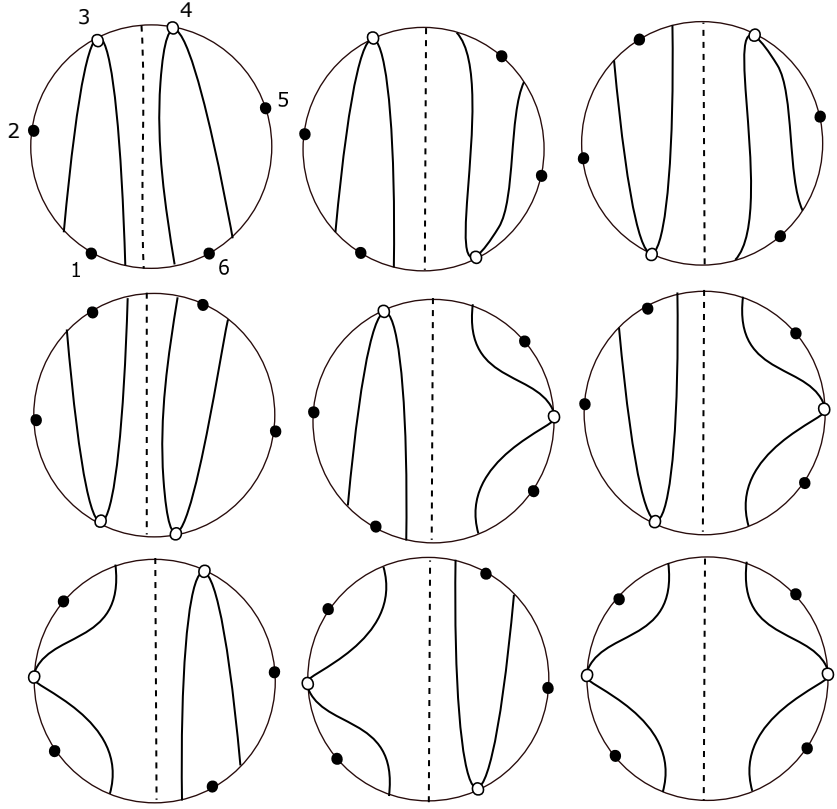}\caption{$T_{6}=3.9=27$ (rotation at
$\frac{\pi}{3}$\ and $\frac{2\pi}{3}$).}%
\label{f3}%
\end{figure}
\end{center}

so, $T_{2}=1$, $T_{4}=4$, $T_{6}=27$, $T_{8}=216$, etc. A quick check in
OEIS's database suggests that highly likely $\left\{  T_{2k}\right\}  $ is in
fact sequence A275607 (see \cite{b4}). We show this later by comparing formula
(\ref{0}) with the explicit expression defining A275607. The diagrams
adjusting that $T_{2}=1$, $T_{4}=4$ and $T_{6}=27$ are depicted at
\hyperref[f2]{Fig.~\ref*{f2}} and \hyperref[f3]{Fig.~\ref*{f3}}, respectively.
In the latter figure one has to perform rotations at angle $\frac{\pi}{3}%
$\ and $\frac{2\pi}{3}$ to get all $3.9=27$ solutions.\ All 27 diagrams are
different, which becomes evident after examinating the position of the
vertical punctured line after rotation.

Note that the numbers $T_{n}$ are counting vector fields only
\textit{combinatorially}, not \textit{topologically}. The latter problem seems
to be much harder and will be discussed at the end.

\section{In identity with Catalan numbers}

The Catalan numbers may be defined by the basic recurrence relation%
\begin{equation}
C_{0}=1,\ \ \ \ C_{n+1}=\sum\limits_{i=0}^{n}C_{i}C_{n-i}. \label{2}%
\end{equation}

Iterating the above identity, it is not difficult to obtain some relation
involving triple products $C_{i}C_{j}C_{k}$.

\begin{proposition}
\label{prop}For $n\geq2$ the following identity holds true%
\begin{equation}
\sum\limits_{i+j+k=n-1}C_{i}C_{j}C_{k}=C_{n+1}-C_{n}, \label{3}%
\end{equation}

where the sum is taken over all triples $(i,j,k)$ such that $i+j+k=n-1$ and
$i,j,k\geq0$.
\end{proposition}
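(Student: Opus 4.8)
The plan is to reduce the triple‐product sum to the ordinary Catalan recurrence \eqref{2} by grouping the summation over the first index. Write
\[
S_n=\sum_{i+j+k=n-1}C_iC_jC_k=\sum_{i=0}^{n-1}C_i\Bigl(\sum_{j+k=n-1-i}C_jC_k\Bigr).
\]
By the defining recurrence \eqref{2}, the inner sum over $j+k=(n-1-i)$ equals $C_{n-i}$ (taking $m=n-1-i$, so $\sum_{j=0}^{m}C_jC_{m-j}=C_{m+1}=C_{n-i}$). Hence
\[
S_n=\sum_{i=0}^{n-1}C_iC_{n-i}.
\]
This last sum is almost $C_{n+1}=\sum_{i=0}^{n}C_iC_{n-i}$; it simply misses the term $i=n$, which is $C_nC_0=C_n$. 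Therefore $S_n=C_{n+1}-C_n$, which is exactly \eqref{3}.

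The main steps, in order, are: (1) split off the first factor $C_i$ and recognize the inner double sum as an instance of \eqref{2}; (2) apply \eqref{2} to collapse the inner sum to $C_{n-i}$; (3) recognize the resulting single convolution as $C_{n+1}$ minus its top term; (4) evaluate that missing term as $C_nC_0=C_n$. One should check the index bookkeeping: when $i$ ranges over $0,\dots,n-1$ we need $n-1-i\ge 0$, which holds, and the inner recurrence \eqref{2} is valid for $m=n-1-i\ge 0$, i.e. for all these $i$; the hypothesis $n\ge 2$ only guarantees the statement is non‑vacuous (for $n\ge1$ it still reads correctly, but $n\ge2$ matches the intended range of application).

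There is essentially no obstacle here — the argument is a two‑line manipulation of the convolution identity — so the only thing to be careful about is the edge term of the summation (the $i=n$ term that must be subtracted) and making sure the range conditions $i,j,k\ge 0$ are respected at each regrouping. An alternative, if one prefers a self‑contained derivation without invoking \eqref{2} twice, is to use the generating function $c(x)=\sum_{m\ge 0}C_mx^m$, which satisfies $c(x)=1+xc(x)^2$: then $\sum_n S_nx^n = x c(x)^3$, while $c(x)^3 = c(x)\cdot\frac{c(x)-1}{x}= \frac{c(x)^2-c(x)}{x}$, and comparing coefficients gives $S_n=C_{n+1}-C_n$ directly. I would present the elementary grouping proof as the main line and perhaps remark on the generating‑function shortcut.
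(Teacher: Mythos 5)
Your proof is correct and is essentially the paper's own argument read in the opposite direction: the paper starts from $C_{n+1}-C_{n}=\sum_{i=1}^{n}C_{i}C_{n-i}$ and expands each $C_{i}$ by the recurrence (\ref{2}), while you start from the triple sum and collapse the inner convolution by the same recurrence, then subtract the single missing boundary term $C_{n}C_{0}$. The index bookkeeping in your version is accurate, so no changes are needed.
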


For example, for $n=3$ the above formula gives%
\[
C_{0}C_{1}C_{1}+C_{1}C_{0}C_{1}+C_{1}C_{1}C_{0}+C_{0}C_{0}C_{2}+C_{0}%
C_{2}C_{0}+C_{2}C_{0}C_{0}=C_{4}-C_{3}.
\]

After substitution we get%
\[
1+1+1+2+2+2=14-5,
\]

which is, of course, true.

To prove the general case, it suffices to write the basic relation (\ref{2})
in the form%
\[
C_{n+1}-C_{n}=\sum\limits_{i=1}^{n}C_{i}C_{n-i}=\sum\limits_{i=1}^{n}\left(
\sum\limits_{j=0}^{i-1}C_{j}C_{i-j-1}\right)  C_{n-i}%
\]

and to notice that the last expression is exactly the left-hand side of
(\ref{3}).

\section{Proof of the main formula}

Let $D_{n}^{0}\subset D_{n}$ be the set of ($n$-nodes)\ diagrams with a fixed
node (say $p_{1}$) which is prescribed to be an $a$-node. The following
proposition is counting the number of such field classes.

\begin{lemma}
\label{lemma}$\left\vert D_{n}^{0}\right\vert =3^{\frac{n}{2}-1}C_{\frac{n}%
{2}-1}=3^{\frac{n}{2}-1}\frac{(n-2)!}{\left(  \frac{n}{2}\right)  !\left(
\frac{n}{2}-1\right)  !}$.
\end{lemma}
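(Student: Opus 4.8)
The plan is to peel off, from a diagram with distinguished $a$-node $p_{1}$, the entire pencil of flow lines issuing from $p_{1}$, thereby splitting the disk into two strictly smaller diagrams of the same kind; this yields a convolution recurrence for $\left\vert D_{2k}^{0}\right\vert $, which I then solve using the Catalan recurrence (\ref{2}).

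First I would analyse the flow near $p_{1}$. Since $p_{1}$ is an $a$-node, a one-parameter family (a ``pencil'') of flow lines issues from a neighbourhood of $p_{1}$ and returns to $\mathbb{S}^{1}$, bounded by two extremal separatrix leaves $\ell _{-}$, $\ell _{+}$ tangent to $\mathbb{S}^{1}$ at $p_{1}$. The union of the pencil with $\ell _{\pm }$ is a region $R$ whose closure meets $\mathbb{S}^{1}$ in an arc carrying no node other than $p_{1}$, and the complement $\mathbb{B}^{2}\setminus R$ falls into two topological sub-disks $B'$, $B''$, each bounded by one of $\ell _{\pm }$ together with a boundary arc of $\mathbb{S}^{1}$. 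Capping $\ell _{\pm }$ off by a round arc, I would check (i) that the restricted flow on $B'$ (resp.\ $B''$) is again a fixed-points-free general-position field, hence represents an element of $D_{2a}$ (resp.\ $D_{2b}$) for suitable $a,b\ge 1$ with $a+b=k$ --- the node-balance relation $\#(a\text{-nodes})-\#(b\text{-nodes})=2$, applied on $\mathbb{B}^{2}$ and on each piece, doing the bookkeeping; (ii) that each piece inherits a distinguished $a$-node (the endpoint, in that piece, of the capped separatrix), so in fact $B'\in D_{2a}^{0}$, $B''\in D_{2b}^{0}$; and (iii) that the construction is compatible with the equivalence relation and is inverted by re-gluing two such sub-diagrams along a bigon at their distinguished nodes. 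The one genuinely combinatorial point is that the local shape of the pencil at $p_{1}$ (how $\ell _{-}$, $\ell _{+}$ sit with respect to the two nodes adjacent to $p_{1}$ and to the two sub-diagrams) realises \emph{exactly three} distinct types, so re-gluing is three-to-one onto $D_{2k}^{0}$.

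Granting this, for $k\ge 2$ one obtains
\begin{equation}
\left\vert D_{2}^{0}\right\vert =1,\qquad
\left\vert D_{2k}^{0}\right\vert
=3\sum_{\substack{a+b=k\\ a,b\ge 1}}
\left\vert D_{2a}^{0}\right\vert \left\vert D_{2b}^{0}\right\vert .
\end{equation}
It then remains only to check that $\left\vert D_{2k}^{0}\right\vert =3^{k-1}C_{k-1}$ solves this, which is an easy induction on $k$: for $k=1$ we have $3^{0}C_{0}=1$, and for $k\ge 2$, putting $i=a-1$, $j=b-1$ and using the induction hypothesis,
\begin{equation}
3\sum_{\substack{a+b=k\\ a,b\ge 1}}3^{a-1}C_{a-1}\,3^{b-1}C_{b-1}
=3\cdot 3^{k-2}\sum_{i+j=k-2}C_{i}C_{j}
=3^{k-1}C_{k-1},
\end{equation}
the middle sum being $C_{k-1}$ by (\ref{2}). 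Setting $k=\frac{n}{2}$ and using $C_{\frac{n}{2}-1}=\frac{(n-2)!}{\left( \frac{n}{2}\right) !\left( \frac{n}{2}-1\right) !}$ yields the stated closed form.

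\textbf{The main obstacle} is wholly in the geometric step: proving rigorously that the pencil at $p_{1}$ gives a well-defined, equivalence-invariant decomposition into two sub-diagrams whose classes sum to $k$, that re-gluing inverts it, and --- the delicate part --- pinning down precisely the three local types that account for the factor $3$. Alternatively one could bypass the recurrence and construct a direct bijection $D_{2k}^{0}\leftrightarrow \{1,2,3\}^{k-1}\times (\text{triangulations of a convex }(k+1)\text{-gon})$, but describing the triangulation and the three labels rests on the same structural analysis of the pencils.
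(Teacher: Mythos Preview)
Your argument is essentially the paper's own proof: peel off the flow-box at $p_{1}$, obtain the convolution recurrence $\alpha_{n}=3\sum_{k}\alpha_{k}\alpha_{n-k}$, and solve it by induction using (\ref{2}). The one point where the paper is sharper is the geometry you flag as the obstacle: the outer boundary of your pencil $R$ is not a pair of leaves tangent at $p_{1}$ but a single touching line $\lambda$ tangent to $\mathbb{S}^{1}$ at a $b$-node, and the three local types are exactly the three positions of $\lambda$ relative to $p_{1}$ (Fig.~\ref{f6}); with this description the splitting into $D_{k}^{0}\times D_{n-k}^{0}$ and the factor $3$ become transparent.
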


\begin{proof}
Set $\alpha_{n}=\left\vert D_{n}^{0}\right\vert $. Note that there are exactly
3 situations realizing elements from $D_{n}^{0}$, these are depicted at
\hyperref[f6]{Fig.~\ref*{f6}}. This may be seen by following the flow lines
going out of node $p_{1}$ and notice that they have to lean on a touching line
$\lambda$ (for $n\geq4$). Then, according to the position of this line with
respect to $p_{1}$, there are 3 topologically different possibilities
represented at \hyperref[f6]{Fig.~\ref*{f6}}. The above touching line is
separating the disk into 3 regions, say $A,B,C$, we shall suppose that $C$ is
containing node $p_{1}$. Note that in region $C$ the flow is very simple and
topologically unique (a sort of a flow-box), so $C$ will not take part in the calculation.

Observe furthermore that the restriction of the field diagram under
consideration on regions $A$ and $B$ may be interpreted as some elements from
$D_{k}^{0}$ and $D_{n-k}^{0}$, respectively, where $k$ is even and $2\leq
k\leq n-2$. This may be seen as follows: we may split $\mathbb{B}^{2}$ along
the touching line $\lambda$ into 3 disjoint pieces and then to shrink the part
of $\lambda$ lying on each one, as shown at \hyperref[f7]{Fig.~\ref*{f7}}. In
such a way we get 3 smaller disks $A_{0},B_{0},C_{0}$ with a new born node on
each one that occurs to be an $a$-node (source). This node will be set to be
the distinguished one from the definition of $D_{n}^{0}$.\ So, the induced
fields on $A_{0},B_{0},C_{0}$ are in fact elements from $D_{k}^{0}$,
$D_{n-k}^{0}$ and $D_{2}^{0}$, respectively. Of course, $D_{2}^{0}$ is a
1-point set, so it doesn't take part in the calculation.

\begin{figure}[ptb]
\includegraphics[width=100mm]{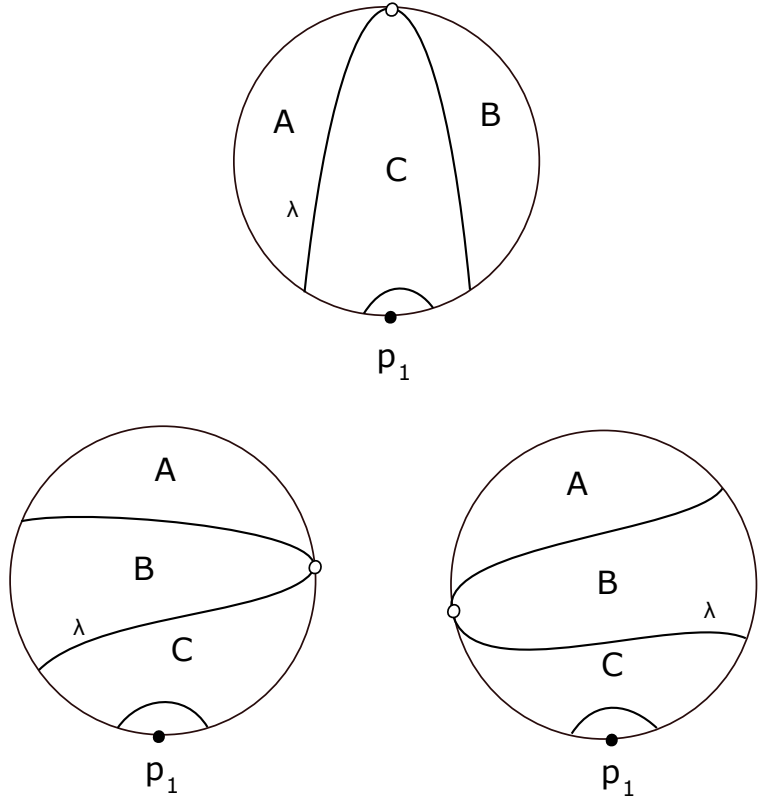}\caption{3 cases for regions $A,B,C$,
the flow on $C$ is trivial.}%
\label{f6}%
\end{figure}

\begin{figure}[ptb]
\includegraphics[width=90mm]{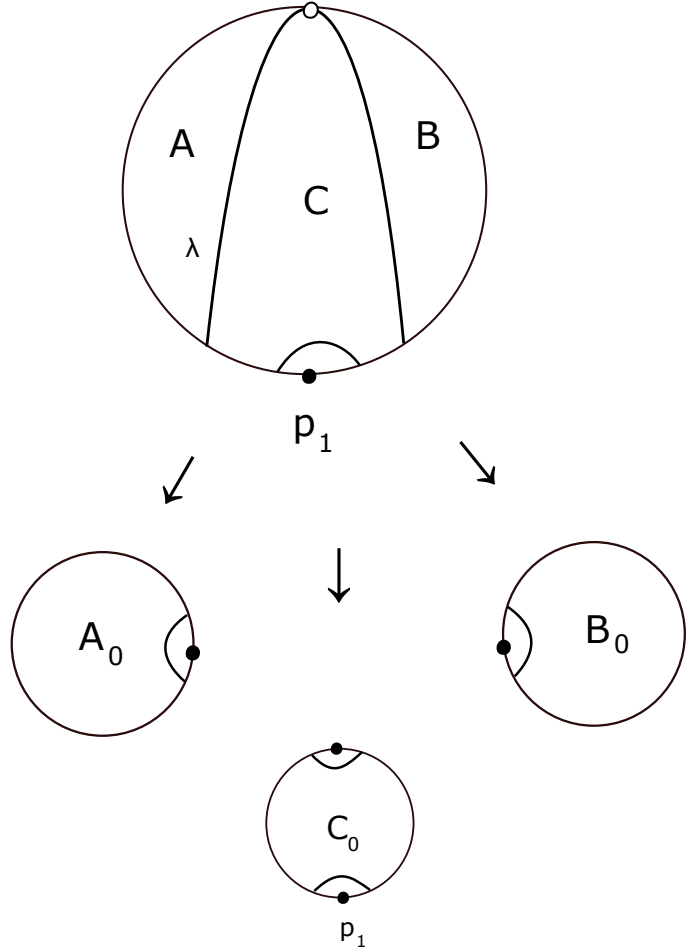}\caption{Splitting the disk
$\mathbb{B}^{2}$ along a touching line $\lambda$.}%
\label{f7}%
\end{figure}

\begin{figure}[ptb]
\includegraphics[width=60mm]{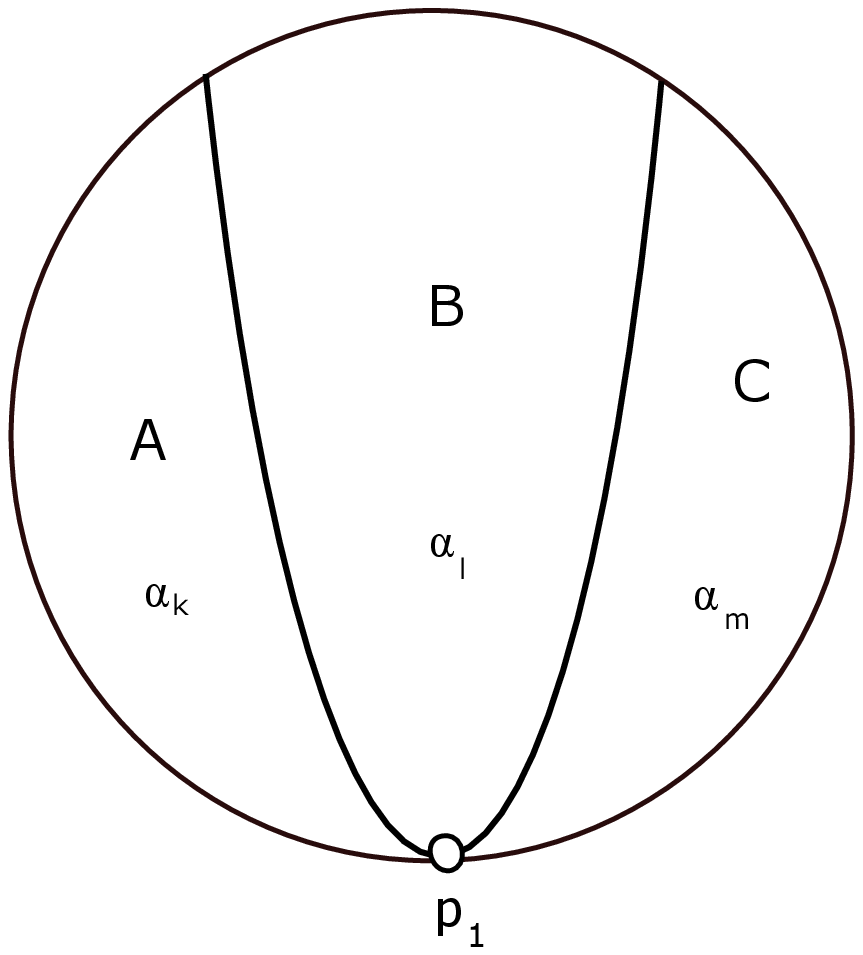}\caption{The case when $p_{1}$ is a
$b$-node.}%
\label{f8}%
\end{figure}

The next observation is that the 3 cases from \hyperref[f6]{Fig.~\ref*{f6}}
are identical from computational point of view. Therefore for numbers
$\alpha_{n}=\left\vert D_{n}^{0}\right\vert $ we get the following recurrence
relation%
\[
\alpha_{n}=3\left(  \alpha_{2}\alpha_{n-2}+\alpha_{4}\alpha_{n-4}+\dots
+\alpha_{n-4}\alpha_{4}+\alpha_{n-2}\alpha_{2}\right)  .
\]

Now it is not difficult to prove the lemma by induction. Suppose the formula
is true for each $k\leq n-2$. Clearly, for $n=2$ it is true, as $\alpha_{2}%
=1$. Then, after substitution in the right-hand side of the recurrence
relation, one gets
\[
\alpha_{n}=3^{\frac{n}{2}-1}\left(  C_{0}C_{\frac{n}{2}-2}+C_{1}C_{\frac{n}%
{2}-3}+\dots+C_{\frac{n}{2}-3}C_{1}+C_{\frac{n}{2}-2}C_{0}\right)
=3^{\frac{n}{2}-1}C_{\frac{n}{2}-1}\text{,}%
\]

where in the last equality we make use of the basic relation (\ref{2}) about
Catalan numbers.

The lemma is proved.
\end{proof}

Note that $\alpha_{n}$ is in fact sequence\ A005159 from OEIS.

We are now ready to prove the main formula about arbitrary diagrams.

\begin{theorem}
Let $T_{n}=\left\vert D_{n}\right\vert .$ Then $T_{n}=3^{\frac{n}{2}-2}\left(
C_{\frac{n}{2}}+2C_{\frac{n}{2}-1}\right)  $.
\end{theorem}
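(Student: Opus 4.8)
The plan is to condition on the type of the distinguished node $p_1$, exactly as in the proof of Lemma~\ref{lemma}, but now allowing $p_1$ to be either an $a$-node or a $b$-node. Writing $D_n = D_n^a \sqcup D_n^b$ according to whether $p_1$ is an $a$-node or a $b$-node, we have $|D_n^a| = |D_n^0| = \alpha_n = 3^{\frac{n}{2}-1}C_{\frac{n}{2}-1}$ by the lemma, so the entire task reduces to computing $|D_n^b|$, the number of diagrams in which the prescribed node $p_1$ is a $b$-node (a sink).

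For $|D_n^b|$ I would run the same local analysis that produced the recurrence for $\alpha_n$. Following the flow lines that come \emph{into} the $b$-node $p_1$, they must lean on a touching line $\lambda$, and this line cuts the disk into three regions $A,B,C$ with $C$ the trivial flow-box containing $p_1$ (this is the configuration of \hyperref[f8]{Fig.~\ref*{f8}}). Splitting $\mathbb{B}^2$ along $\lambda$ and shrinking as in \hyperref[f7]{Fig.~\ref*{f7}} again produces a newborn node on each of the two nontrivial pieces $A_0, B_0$ — but now the newborn nodes are $a$-nodes (sources) on \emph{both} pieces, since the flow in the neighbourhood of $\lambda$ on the $A,B$ side is outgoing. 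Hence the induced fields on $A_0$ and $B_0$ are elements of $D_k^0$ and $D_{n-k}^0$, i.e. counted by $\alpha_k$ and $\alpha_{n-k}$. The subtlety to check carefully — and this is the step I expect to be the main obstacle — is how many topologically distinct global configurations arise for the $b$-node case and whether the splitting is a bijection onto pairs $(A_0,B_0)$; one must verify that, unlike the $a$-node case which had the factor $3$ coming from three positions of $\lambda$ relative to $p_1$, here the count is arranged so that the total works out to the claimed formula. Granting that the resulting recurrence is
\begin{equation}
|D_n^b| = \alpha_2\alpha_{n-2} + \alpha_4\alpha_{n-4} + \dots + \alpha_{n-2}\alpha_2, \label{brec}
\end{equation}
substituting $\alpha_{2j} = 3^{j-1}C_{j-1}$ and applying the basic Catalan recurrence (\ref{2}) gives $|D_n^b| = 3^{\frac{n}{2}-2}\bigl(C_0 C_{\frac{n}{2}-2} + \dots + C_{\frac{n}{2}-2}C_0\bigr) = 3^{\frac{n}{2}-2} C_{\frac{n}{2}-1}$.

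Finally I would add the two contributions:
\begin{equation}
T_n = |D_n^a| + |D_n^b| = 3^{\frac{n}{2}-1}C_{\frac{n}{2}-1} + 3^{\frac{n}{2}-2}C_{\frac{n}{2}-1} \label{wrongsum}
\end{equation}
— but this only yields $4\cdot 3^{\frac{n}{2}-2}C_{\frac{n}{2}-1}$, which is \emph{not} the claimed $3^{\frac{n}{2}-2}(C_{\frac{n}{2}} + 2C_{\frac{n}{2}-1})$ unless $C_{\frac{n}{2}} = 2C_{\frac{n}{2}-1}$, which fails. So the real content must be that the $b$-node recurrence is \emph{not} (\ref{brec}); more likely, when $p_1$ is a $b$-node there is genuine freedom producing a term governed by Proposition~\ref{prop}. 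The more plausible structure is that an \emph{unconstrained} diagram decomposes, via the first touching line encountered from a canonical reference $a$-node, into three independent sub-diagrams $A_0, B_0, C_0$ — now all three nontrivial — giving a triple convolution $\sum_{k+l+m = \,\cdot} \alpha_k \alpha_l \alpha_m$; after factoring powers of $3$ and invoking Proposition~\ref{prop} this produces $C_{\frac{n}{2}+1} - C_{\frac{n}{2}}$-type terms, and a short bookkeeping step combining the $a$-node count $\alpha_n$ with this triple-convolution count yields $3^{\frac{n}{2}-2}(C_{\frac{n}{2}} + 2C_{\frac{n}{2}-1})$ after using $C_{m+1} = \sum C_i C_{m-i}$ once more. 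The main obstacle, then, is identifying the correct canonical decomposition of an arbitrary diagram (which node to distinguish, how the two ``extra'' $a$-nodes force the combinatorics) so that the triple product of Proposition~\ref{prop} appears cleanly; once the decomposition is pinned down, the algebra is a routine application of (\ref{2}) and (\ref{3}).
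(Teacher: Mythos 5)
There is a genuine gap, and it sits exactly at the step that carries the content of the theorem. Your analysis of the case when $p_1$ is a $b$-node uses the wrong local picture: you transplant the $a$-node configuration and declare the region containing $p_1$ to be a trivial flow-box, but when $p_1$ is a $b$-node the touching line $\lambda$ is the flow line tangent to $\mathbb{S}^1$ \emph{at $p_1$ itself}, with its two ends crossing the boundary elsewhere (see Fig.~\ref{f8}). This $\lambda$ cuts $\mathbb{B}^2$ into \emph{three} regions, none of which is trivial in general. Splitting along $\lambda$ and shrinking, as in the proof of Lemma~\ref{lemma}, produces three smaller disks, each with one newborn $a$-node, so they carry elements of $D_k^0$, $D_l^0$, $D_m^0$ with $k,l,m\geq 2$ even and $k+l+m=(n-1)+3=n+2$ (the node $p_1$ is consumed, three nodes are born). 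Moreover there is no extra factor $3$ in this case, since $\lambda$ is uniquely determined by $p_1$. Hence the $b$-node contribution is $\sum_{k+l+m=n+2}\alpha_k\alpha_l\alpha_m=3^{\frac{n}{2}-2}\sum_{i+j+r=\frac{n}{2}-2}C_iC_jC_r=3^{\frac{n}{2}-2}\bigl(C_{\frac{n}{2}}-C_{\frac{n}{2}-1}\bigr)$ by Proposition~\ref{prop}, and adding $\alpha_n=3^{\frac{n}{2}-1}C_{\frac{n}{2}-1}$ gives the stated formula.

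Your two-fold convolution for the $b$-node count is therefore not just unjustified but false: for $n=6$ it gives $6$, whereas the correct count is $27-18=9$, so your total would be $24$ instead of $T_6=27$ --- a mismatch you did detect from the final formula. However, your fallback is only speculation: the decomposition you then propose (along ``the first touching line encountered from a canonical reference $a$-node'') is not the one that works, and the term you predict, of type $C_{\frac{n}{2}+1}-C_{\frac{n}{2}}$, has the wrong index, since the triple sum runs over $i+j+r=\frac{n}{2}-2$ and yields $C_{\frac{n}{2}}-C_{\frac{n}{2}-1}$. You explicitly leave ``identifying the correct canonical decomposition'' as an open obstacle, so the proof is incomplete precisely where the argument of the paper does its work: recognizing that the touching line through the $b$-node $p_1$ splits the diagram into three independent sub-diagrams counted by $\alpha_k\alpha_l\alpha_m$ with $k+l+m=n+2$.
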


\begin{proof}
Take some $v\in D_{n}$ and consider node $p_{1}$. There are two cases:

1) $p_{1}$ is an $a$-node

2) $p_{1}$ is a $b$-node.

Clearly, $T_{n}$ is a sum of cases 1) and 2). In case 1) the number of such
fields equals $\alpha_{n}$ and is counted by \hyperref[lemma]%
{Lemma~\ref*{lemma}}.

Suppose now that $p_{1}$ is a $b$-node and $\lambda$ is the touching line
passing through it (see \hyperref[f8]{Fig.~\ref*{f8}}). Then $\lambda$ is
separating the disk into some regions $A,B,C$. Now, splitting $\mathbb{B}^{2}$
along $\lambda$ and reasoning exactly as in the proof of \hyperref[lemma]%
{Lemma~\ref*{lemma}}, it is clear that for this particular choice of the
touching line $\lambda$ we get a summand in case 2) of the form $\alpha
_{k}\alpha_{l}\alpha_{m}$ where $k+l+m=n+2$ for some even numbers $k,l,m\geq
2$. Taking into account these remarks we have%
\[
T_{n}=\alpha_{n}+\sum\limits_{k+l+m=n+2}\alpha_{k}\alpha_{l}\alpha_{m}\text{,}%
\]

where the sum runs over all triples of even numbers $k,l,m\geq2$ for which
$k+l+m=n+2$. We now refer to \hyperref[lemma]{Lemma~\ref*{lemma}} to calculate%
\[
\alpha_{k}\alpha_{l}\alpha_{m}=3^{\frac{k}{2}-1}3^{\frac{l}{2}-1}3^{\frac
{m}{2}-1}C_{\frac{k}{2}-1}C_{\frac{l}{2}-1}C_{\frac{m}{2}-1}=3^{\frac{n}{2}%
-2}C_{\frac{k}{2}-1}C_{\frac{l}{2}-1}C_{\frac{m}{2}-1}\text{.}%
\]

Furthermore, it is not difficult to see that by \hyperref[prop]%
{Proposition~\ref*{prop}}
\[
\sum\limits_{k+l+m=n+2}C_{\frac{k}{2}-1}C_{\frac{l}{2}-1}C_{\frac{m}{2}%
-1}=C_{\frac{n}{2}}-C_{\frac{n}{2}-1},
\]

so, finally we have%
\[
T_{n}=3^{\frac{n}{2}-1}C_{\frac{n}{2}-1}+3^{\frac{n}{2}-2}\left(  C_{\frac
{n}{2}}-C_{\frac{n}{2}-1}\right)  ,
\]

and after simplification one gets%
\[
T_{n}=3^{\frac{n}{2}-2}\left(  C_{\frac{n}{2}}+2C_{\frac{n}{2}-1}\right)  .
\]

The theorem is proved.
\end{proof}

\begin{remark}
It follows from the proof of the theorem that each touching line $\lambda$
takes part into exactly $3^{\frac{n}{2}-2}C_{\frac{k}{2}-1}C_{\frac{l}{2}%
-1}C_{\frac{m}{2}-1}$ diagrams from $D_{n}$. Here $k,l,m$ depend on the size
of regions $A,B,C$.
\end{remark}

\begin{remark}
It might be of some interest to consider the following specification of the problem:

Find the number of diagrams from class $D_{n}$ with prescribed distribution of
\newline$a$- and $b$-nodes. Clearly, it suffices to fix the places of
$b$-nodes which may be done by $\binom{n}{n/2-1}$ ways. Note that the result
sensitively depends on the initial distribution of the nodes. For example, if
$n=6$ we have to choose a pair of $b$-nodes and one checks that there are 3
possible results for the corresponding number: $(1,2)\rightarrow1$,
$(1,3)\rightarrow2$, $(1,4)\rightarrow3$. For greater values of $n$ the
problem seems much harder.
\end{remark}

\section{Sequence A275607}

As we mentioned at the beginning, the calculation of the first terms of
sequence $T_{2k}$, $k=1,2\dots$ gives%
\[
\mathbf{1,4,27,216,1890,17496,168399,\dots}\text{,}%
\]

which coincide with the first terms of sequence A275607 from OEIS. An explicit
formula for A275607 was found by Charles R. Greathouse (see \cite{b4})%
\[
a(n)=\frac{2.3^{n}(n+1)}{2n^{2}+n}\binom{2n+1}{n-1}.
\]

It may be shown that $T_{2n+2}=a(n)$, or%
\[
3^{n-1}\left(  C_{n+1}+2C_{n}\right)  =\frac{2.3^{n}(n+1)}{2n^{2}+n}%
\binom{2n+1}{n-1},
\]

identifying in such a way our sequence $T_{2k}$ with A275607. By the way,
Wolfram Alpha gives the following equality%
\[
3^{n-1}\left(  C_{n+1}+2C_{n}\right)  =\frac{2.4^{n}(n+1)\Gamma(n+\frac{1}%
{2})}{\sqrt{\pi}\Gamma(n+3)},
\]

and since the right-hand side is the original definition of A275607, this is
another way to identify $T_{2k}$.

Taking into account that the (ordinary) generating function for Catalan
numbers is $\frac{1-\sqrt{1-4x}}{2x}$, it is not difficult to find a
generating function for sequence $T_{2k}$:%
\[
T(x)=\frac{(1+6x)(1-\sqrt{1-12x})}{54x}.
\]

Indeed, a calculation with Wolfram Alpha gives%
\[
\frac{(1+6x)(1-\sqrt{1-12x})}{54x}=\frac{1}{9}+x+4x^{2}+27x^{3}+216x^{4}%
+1890x^{5}+17496x^{6}+\dots\text{,}%
\]

so the coefficient of $x^{k}$ equals $T_{2k}$.

\section{How to find all diagrams with $n$ nodes}

The knowledge of numbers $T_{n}$ doesn't tell us how to find the corresponding
diagrams with $n$ nodes. Even for $n=8$ the attempt to find these \textit{by
hand} seems counterproductive, as their number is 216.

Here we propose an algorithm for this task, which has two steps:

1) constructing some graph $\Gamma_{n}$ whose maximal independent vertex sets
are in 1-to-1 correspondence with the $n$ nodes diagrams

2) finding all such independent sets in $\Gamma_{n}$ and then tracing back all
diagrams with $n$ nodes $D_{n}$.

The productivity of this algorithm depends on the algorithm selected for
enumerating the maximal independent sets in $\Gamma_{n}$. Note that there
might be a significant improvement of the known general algorithms, in view of
the fact that all the maximal independent sets of $\Gamma_{n}$ surely have one
and the same size = $\frac{n}{2}-1$. Moreover, the number of all such sets is
known \textit{a priori} and should equal number $T_{n}$. Another advantage is
the observation that the graph $\Gamma_{n}$ has a rich group of symmetries, so
each maximal independent set automatically produces a series of such sets in
$\Gamma_{n}$.

\textbf{1.} \textbf{Construction of graph }$\Gamma_{n}$

\textbf{a) The vertices}

Roughly speaking, the vertices of $\Gamma_{n}$ are the virtual touching lines
$\lambda$ in the disk and two $\lambda,\lambda^{\prime}$ are connected by an
edge if and only if they are, in some sense, linked in $\mathbb{B}^{2}$.

Suppose for simplicity that the set of nodes is a regular $n$-gon and the
nodes are clockwise marked by integers%
\[
A=\left\{  1,2,\dots,n\right\}  .
\]
Then the vertices of $\Gamma_{n}$ are all triples of nodes $(a,p,q)$ such that

1) $a,p,q\in A$, $a\neq p,q$ and all 3 nodes have the same parity, e.g.
$(1,3,7)$, $(8,2,6)$

2) the triple $(a,p,q)$ is clockwise oriented on $\mathbb{S}^{1}$, in case
$a,p,q$ are different

3) all triples of type $(a,p,p)$, where $a$ and $p$ are different and have the
same parity, belong to $\Gamma_{n}$, e.g. $(2,4,4)$, $(4,2,2)$, $(7,3,3)$.

It turns out that the touching lines of a given diagram from $D_{n}$ may be
coded by such triples as follows:

- $a$ is the node where line $\lambda$ is touching $\mathbb{S}^{1}$

- $p$ and $q$ are the nodes closest to $\lambda$ which lie in the component of
$\mathbb{B}^{2}\backslash\lambda$ containing the whole $\lambda$ in its
boundary (see \hyperref[f9]{Fig.\ref*{f9}-1}).

Note that the case $(a,p,p)$ is not excluded and corresponds to the situation
from \hyperref[f9]{Fig.\ref*{f9}-2}. Another important observation is that the
parity of $a,p,q$ is the same, as there is an odd number of nodes between $a$
and either of $p,q$. In such a way, any touching line is uniquely identified
by some triple $(a,p,q)\in\Gamma_{n}$.

\begin{figure}[ptb]
\includegraphics[width=95mm]{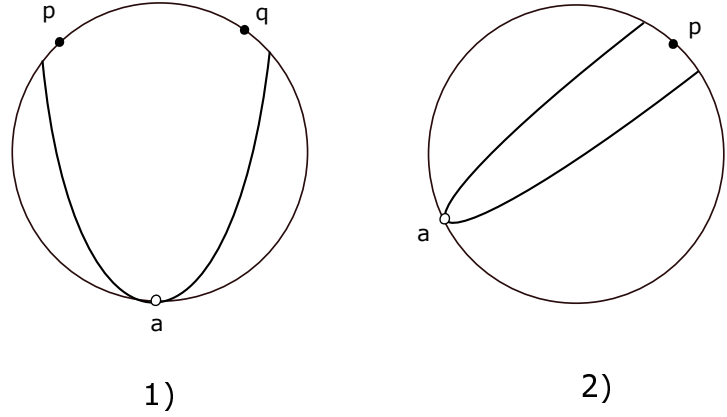}\caption{Vertices of $\Gamma_{n}$ of
type $(a,p,q)$ and $(a,p,p)$.}%
\label{f9}%
\end{figure}

Furthermore, it is not difficult to calculate the number of vertices of
$\Gamma_{n}$:%
\[
\left\vert \Gamma_{n}\right\vert =\frac{n^{2}(n-2)}{8}\text{.}%
\]

For example $\left\vert \Gamma_{4}\right\vert =4$, $\left\vert \Gamma
_{6}\right\vert =18$, that may be checked by hand.

\textbf{b) The edges}

Now we have to define when two vertices of $\Gamma_{n}$ are connected by an
edge. We shall proceed in an equivalent way by defining when two vertices
\textit{are not} connected by an edge. So, we shall define the complementary
graph $\overline{\Gamma_{n}}$ rather than $\Gamma_{n}$. The cause for this
approach is that graph $\Gamma_{n}$ turns out to be quite \textit{dense}, so
$\overline{\Gamma_{n}}$ should be \textit{sparse} and thus easier to describe
(depict). Then, of course, each invariant of $\overline{\Gamma_{n}}$ transfers
to a dual invariant of $\Gamma_{n}$ and vice versa.

It is convenient to consider addition in the circular set $A$ modulo $n$, e.g.
$n+1=1$.

Hereafter we describe the situations when two vertices of $\overline
{\Gamma_{n}}$ are connected by an edge. Let $v_{1}=(a_{1},p_{1},q_{1}%
),v_{2}=(a_{2},p_{2},q_{2})$ be two vertices of $\overline{\Gamma_{n}}$.

\ 1a) If $a_{1}=a_{2}$, then $v_{1}$ and $v_{2}$ are \textit{not }connected in
$\overline{\Gamma_{n}}$.

\ 1b) $(a,b,b)$ and $(b,a,a)$ are also not connected in $\overline{\Gamma_{n}%
}$ (see \hyperref[f10]{Fig.\ref*{f10}-2})

2) If the nodes $a_{i},p_{i},q_{i}$ are all different, then

\ \ 2a) $v_{1}$ and $v_{2}$ are connected if $Co(v_{1})\cap Co(v_{2}%
)=\varnothing$, where $Co(v)$ is the convex hull of the nodes of $v$, see
\hyperref[f11]{Fig.\ref*{f11}-1}.

\ \ 2b) In case $Co(v_{1})\cap Co(v_{2})\neq\varnothing$, then $v_{1}$ and
$v_{2}$ are not connected in $\overline{\Gamma_{n}}$ (and thus connected in
$\Gamma_{n}$, see \hyperref[f10]{Fig.\ref*{f10}-1}).

E.g. $(1,3,5)$ and $(8,6,7)$ are connected, while $(1,3,5)$ and $(8,2,4)$ are not.

3) The following pairs are connected:

\ \ 3a) $(a,p+1,q)$ and $(b,q+1,p)$, where $a$ and $b$ have different parity,
all 6 nodes involved are different and segment $[a,b]$ is separating $[p,p+1]$
from $[q,q+1]$. E.g. $(1,3,7)$ and $(6,8,2)$. This situation is depicted on
\hyperref[f11]{Fig.\ref*{f11}-2}

\ \ 3b) $(a,p,b)$ and $(b,p,x)$ where $a$ and $b$ have the same parity, e.g.
$(2,4,8)$ and $(8,4,6)$, $(1,3,5)$ and $(5,3,3)$; see \hyperref[f12]%
{Fig.\ref*{f12}-1}

\ \ 3c) $(a,b,p)$ and $(b,x,p)$ where $a$ and $b$ have the same parity, e.g.
$(2,6,8)$ and $(6,8,8)$; see \hyperref[f12]{Fig.\ref*{f12}-2}.

\begin{figure}[ptb]
\includegraphics[width=95mm]{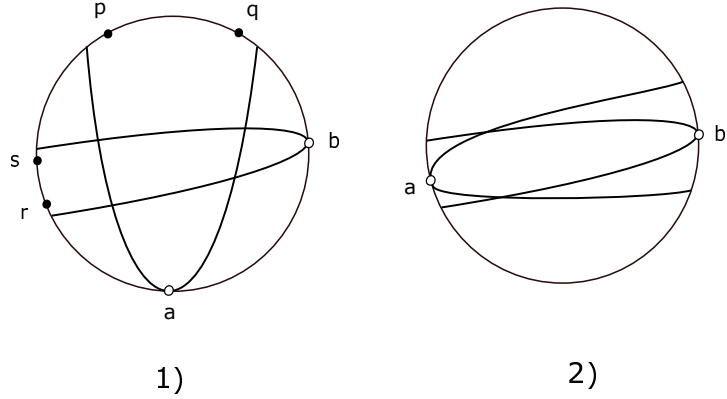}\caption{Edges in $\Gamma_{n}$.}%
\label{f10}%
\end{figure}

\begin{center}
\begin{figure}[ptb]
\includegraphics[width=95mm]{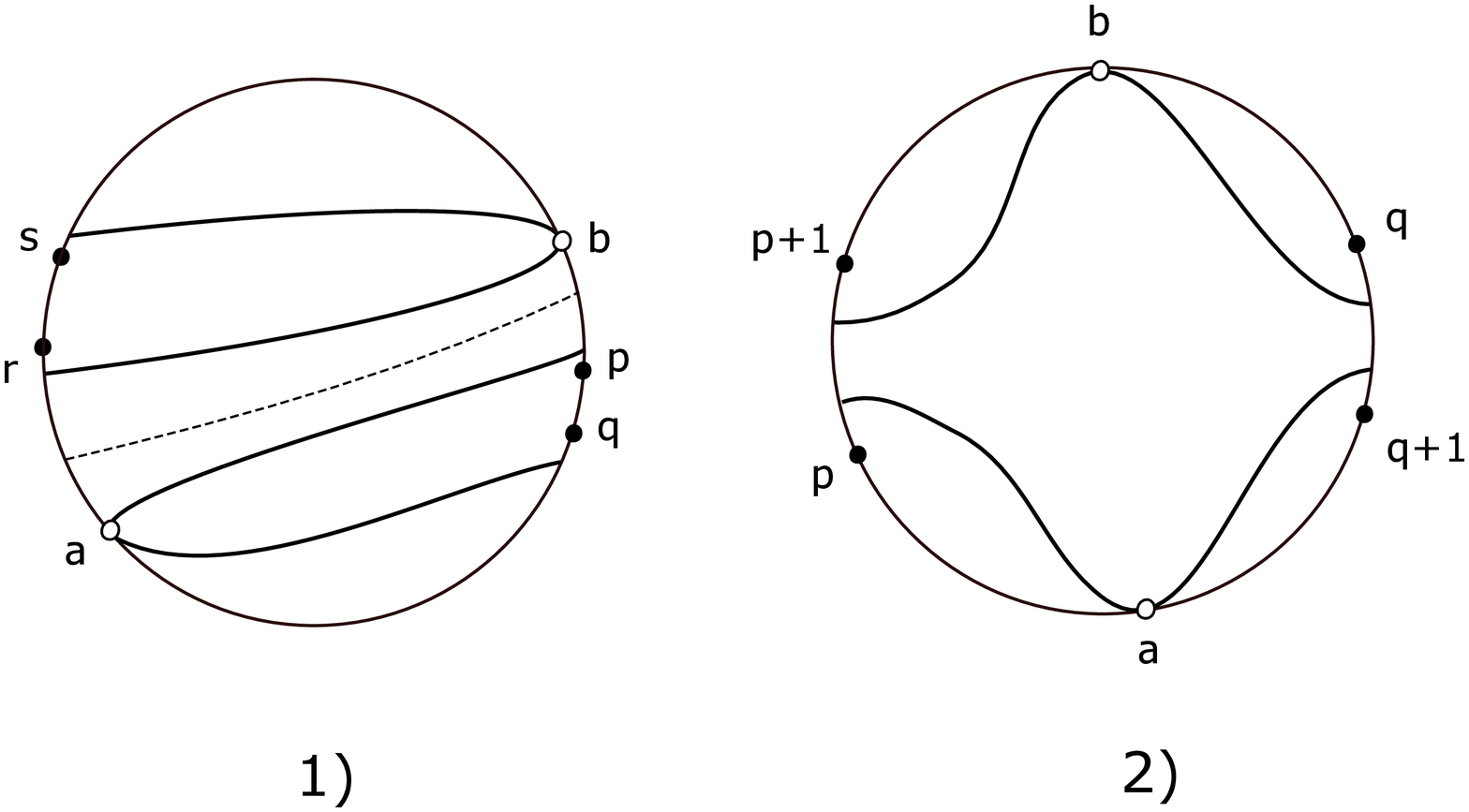}\caption{Edges in $\overline
{\Gamma_{n}}$.}%
\label{f11}%
\end{figure}

\begin{figure}[ptb]
\includegraphics[width=95mm]{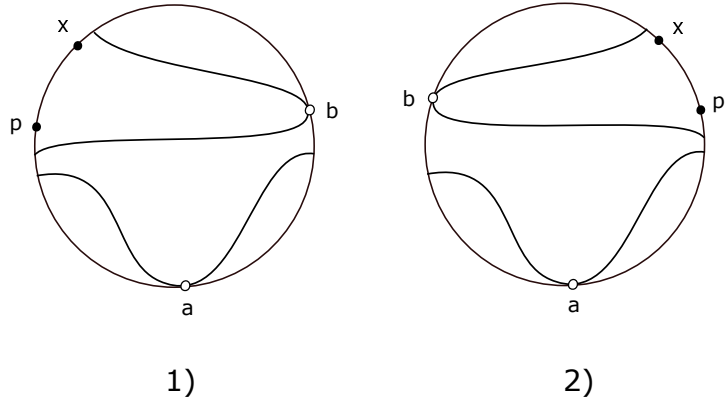}\caption{Edges in $\overline
{\Gamma_{n}}$.}%
\label{f12}%
\end{figure}
\end{center}

\ Clearly, graph $\Gamma_{4}$ is a 4-clique, while $\Gamma_{6}$ may be
characterized by the observation that its complementary graph $\overline
{\Gamma_{6}}$ is a disjoint sum of 3 copies of the Kuratowski graph $K_{3,3}$.
This becomes evident after examination of \hyperref[f3]{Fig.~\ref*{f3}}. Note that $\overline{\Gamma
_{6}}=K_{3,3}\sqcup K_{3,3}\sqcup K_{3,3}$ is triangle free and has 27 edges,
which illustrates \hyperref[t1]{Theorem~\ref*{t1}} , since $T_{6}=27$.

It is not difficult to show that $\Gamma_{n}$ is connected for $n\geq6$, while
$\overline{\Gamma_{n}}$ is connected for $n\geq8$. Also, only $\Gamma_{4}$ and
$\Gamma_{6}$ are regular, the others are not. For example, $\Gamma_{8}$ has
two types of vertices - of degree 6 and 15.

The importance of graph $\Gamma_{n}$ rely on the observation that each diagram
from $D_{n}$ corresponds to a maximal independent set of vertices of
$\Gamma_{n}$ and vice versa. Then, as explained above, it is convenient to
pass to the complementary graph $\overline{\Gamma_{n}}$ and to notice that the
maximal independent sets in $\Gamma_{n}$ correspond to maximal cliques in
$\overline{\Gamma_{n}}$.

\begin{theorem}
\label{t1}The maximal cliques of graph $\overline{\Gamma_{n}}$ are in 1-to-1
correspondence with the diagrams from $D_{n}$. All the maximal cliques have
one and the same size = $\frac{n}{2}-1$. Their number is $T_{n}=3^{\frac{n}%
{2}-2}\left(  C_{\frac{n}{2}}+2C_{\frac{n}{2}-1}\right)  $, where $C_{m}$ is
the $m$-th Catalan number.
\end{theorem}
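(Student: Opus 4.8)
The plan is to produce an explicit bijection $\Phi$ between $D_{n}$ and the set of maximal cliques of $\overline{\Gamma_{n}}$; once this is in place, parts (2) and (3) of the statement follow formally. Part (3) is immediate from the bijection together with the formula $T_{n}=\left\vert D_{n}\right\vert =3^{\frac{n}{2}-2}\left(C_{\frac{n}{2}}+2C_{\frac{n}{2}-1}\right)$ already proved above. Part (2) also drops out: every diagram in $D_{n}$ has exactly $\frac{n}{2}-1$ $b$-nodes, each $b$-node carries exactly one touching line, and in general position distinct touching lines meet $\mathbb{S}^{1}$ at distinct points, so a diagram has precisely $\frac{n}{2}-1$ touching lines; hence every maximal clique, being the image of a diagram, has $\frac{n}{2}-1$ vertices. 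So the whole content is the construction and bijectivity of $\Phi$.

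First I would define $\Phi$. Given $v\in D_{n}$, let $\Phi(v)$ be the set of triples $(a,p,q)$ coding the touching lines of $v$, as described just before the theorem. That this is a well-defined subset of the vertex set of $\Gamma_{n}$ was checked there; moreover the coding is injective on the touching lines of a single diagram, since the first entry $a$ is the $b$-node through which $\lambda$ passes and distinct touching lines have distinct $b$-nodes. The key point is that $\Phi(v)$ is a clique of $\overline{\Gamma_{n}}$, i.e. any two of these triples are joined by an edge of $\overline{\Gamma_{n}}$. To prove this I would first observe that in a genuine diagram the touching lines form a laminar-type family: the region of $\mathbb{B}^{2}$ that any touching line $\lambda$ cuts off (the bounded component whose boundary contains all of $\lambda$, in the notation preceding the theorem) is, for any other touching line, either disjoint from or nested inside the corresponding region of that line, the only extra possibilities being the boundary degeneracies where the two lines share an end node or one of them is of type $(a,p,p)$. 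Then I would run through this short list of mutually exclusive relative positions of $\lambda$ and $\lambda^{\prime}$ and check, case by case, that each one matches exactly one of the defining situations 1a)--3c) for edges of $\overline{\Gamma_{n}}$ (the nested/disjoint cases being governed by the convex-hull conditions 2a), 2b), the boundary-incident cases by 1a), 1b), 3a), 3b), 3c)). This yields that $\Phi(v)$ is a clique of size $\frac{n}{2}-1$.

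For the reverse direction I would construct a map $\Psi$ from maximal cliques of $\overline{\Gamma_{n}}$ back to $D_{n}$ and show $\Psi$ and $\Phi$ are mutually inverse. Given a maximal clique $K$, read its vertices as virtual touching lines; the edge conditions force these curves to be pairwise compatibly nested, so they chop $\mathbb{B}^{2}$ into regions, and on each such region the flow is forced to be a flow box, exactly as in the splitting arguments used in the proofs of Lemma~\ref{lemma} and of the main theorem. This produces a well-defined diagram $\Psi(K)\in D_{n}$, and maximality of $K$ guarantees that no further touching line fits, i.e. that nothing in the diagram is left undetermined. Then $\Psi\circ\Phi=\mathrm{id}_{D_{n}}$, because a diagram is recovered from its touching lines by the same flow-box reconstruction, and $\Phi\circ\Psi=\mathrm{id}$ on maximal cliques by inspection; in particular $\Phi(v)$ is automatically maximal, since a strictly larger clique would be sent by $\Psi$ to a diagram with more than $\frac{n}{2}-1$ touching lines, which is impossible. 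I expect to carry the laminarity and reconstruction steps by induction on $n$, peeling off one outermost touching line as in the proof of Lemma~\ref{lemma}.

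The main obstacle is precisely the case analysis of the previous two paragraphs: proving that the edge list 1a)--3c) captures \emph{exactly} the relation ``$\lambda$ and $\lambda^{\prime}$ can occur together in some diagram of $D_{n}$'', with no edge missing and none spurious, and that a maximal family of pairwise compatible virtual touching lines reconstructs one and only one diagram. Once that correspondence between geometric compatibility of touching lines and adjacency in $\overline{\Gamma_{n}}$ is pinned down, the counting assertions of the theorem are immediate from the main formula.
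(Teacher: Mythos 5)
Your proposal takes essentially the same route as the paper, whose entire proof of this theorem is the single sentence that it ``follows easily from the above considerations'': diagrams are encoded by the triples coding their touching lines, pairwise compatibility of touching lines is matched with adjacency in $\overline{\Gamma_{n}}$, maximality corresponds to a complete diagram, and the size and counting claims then follow from the count of $b$-nodes and the main formula for $T_{n}$. Your sketch is in fact more explicit than the paper's argument; the case analysis you defer (that conditions 1a)--3c) capture exactly when two touching lines can coexist, and that a maximal clique reconstructs one and only one diagram) is precisely the part the paper also leaves unverified.
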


The proof follows easily from the above considerations.

Then one may refer to the known algorithms for finding the maximal cliques of
a given graph, e.g. the Bron-Kerbosch algorithm (see \cite{b5}). However, we
wouldn't recommend the general algorithms in our case. For example, the
worst-case running time of the Bron-Kerbosch algorithm is $O\left(
3^{\frac{N}{3}}\right)  $, where $N$ is the number of vertices, which applied
to $\overline{\Gamma_{n}}$ gives $O\left(  3^{\frac{n^{3}}{24}}\right)  $ as
an a priori estimate (very slow). Hereafter we shall list the most important
properties of $\Gamma_{n}$ and $\overline{\Gamma_{n}}$ that might suggest some
much better algorithm.

1. The graphs $\Gamma_{n}$ and $\overline{\Gamma_{n}}$ are connected for
$n\geq8$.

2. $\left\vert \Gamma_{n}\right\vert =\left\vert \overline{\Gamma_{n}%
}\right\vert =\frac{n^{2}(n-2)}{8}$.

3. All the maximal cliques of $\overline{\Gamma_{n}}$ have one and the same
size = $\frac{n}{2}-1$.

4. The number of all maximal cliques of $\overline{\Gamma_{n}}$ is
$3^{\frac{n}{2}-2}\left(  C_{\frac{n}{2}}+2C_{\frac{n}{2}-1}\right)  $.

5. Each vertex of $\overline{\Gamma_{n}}$ takes part into exactly $3^{\frac
{n}{2}-2}C_{\frac{k}{2}-1}C_{\frac{l}{2}-1}C_{\frac{m}{2}-1}$ maximal cliques,
where $k,l,m$ depend on the geometric line representing the vertex.

6. $\Gamma_{n}$ and $\overline{\Gamma_{n}}$ are not regular for $n\geq8$, but
the degree of each vertex may be calculated considering the position of the
touching line corresponding to the vertex.

7. The group $G_{n}=\mathbb{Z}_{n}\oplus\mathbb{Z}_{2}^{n}$ is acting in a
natural way in both $\Gamma_{n}$ and $\overline{\Gamma_{n}}$. This is
explained in the next section.

\section{Counting fields topologically}

We shall try in this final section to settle the general problem for counting
stationary points free vector fields in $\mathbb{B}^{2}$ from topological
point of view. Note that in our previous counting scheme one and the same
topological picture appears many times, see for example \hyperref[f3]{Fig.~\ref*{f3}}. So, it is a good
point to eliminate multiple pictures and then to count topologically different
diagrams. Let us say from the beginning that we won't do this here. This
problem seems to be much harder than the combinatorial one solved in
Section~3. Anyway, we shall describe formally the topology equivalence classes
of fields, reducing these to a quite concrete object. The problem is to find
its cardinality.

It turns out that the answer to the question is closely related to the action
of group $G_{n}=\mathbb{Z}_{n}\oplus\mathbb{Z}_{2}^{n}$ on $\overline
{\Gamma_{n}}$.

We say that group $G$ is acting on graph $\Gamma$, if $G$ is acting on the
vertex set of $\Gamma$ and is incidence preserving. It is geometrically
evident that $\mathbb{Z}_{n}$ is acting in $\Gamma_{n}$ and $\overline
{\Gamma_{n}}$ by rotations in the set of nodes at angle $2\pi/n$, while
$\mathbb{Z}_{2}^{n}$ is acting by symmetries about the lines containing pairs
of opposite nodes $\left(  k,k+\frac{n}{2}\right)  $, as well as pairs of type
$\left(  k+\frac{1}{2},k+\frac{n+1}{2}\right)  $\ (recall that addition is
$\operatorname{mod}n$). Let now $\Delta_{n}$ be the set of all maximal cliques
in $\overline{\Gamma_{n}}$, then the action of $G_{n}$ on $\overline
{\Gamma_{n}}$ transfers to an action on $\Delta_{n}$.\ Note that this action
is \textit{not} \textit{free} and even has fixed points. Each element of
$\Delta_{n}$ is periodic and its period is a divisor of $2n$. The action of
the $\mathbb{Z}_{n}$-factor of $G_{n}$ cannot have involutory or fixed
elements (for $n\geq4$), while, of course, each element of $\Delta_{n}$ is
involutory under $\mathbb{Z}_{2}^{n}$. One advantage, we may take profit of,
is the following simple observation:

If $C\in\Delta_{n}$ is a maximal clique of $\overline{\Gamma_{n}}$, then all
the sets%
\[
GC=\left\{  gC~|~g\in G_{n}\right\}
\]

are maximal cliques as well. This may provide us with many different diagrams
from $D_{n}$ for free, in the best case with $2n$ new diagrams.

Topological equivalence of stationary points free vector fields in
$\mathbb{B}^{2}$ is defined in a natural way:

$v_{1}\sim v_{2}$ if there is a diffeomorphism $\varphi:\mathbb{B}%
^{2}\rightarrow\mathbb{B}^{2}$ which is transforming the flow lines of $v_{1}$
onto those of $v_{2}$. (Here $\varphi$ is allowed to be \textit{orientation
reversing}.)

Note that if $v_{1}\sim v_{2}$, then surely $v_{1}$ and $v_{2}$ both
have$\ \frac{n}{2}+1$ $a$-nodes and $\frac{n}{2}-1$ $b$-nodes on
$\mathbb{S}^{1}$ for some even $n$. Let $R_{n}$ denote the set of equivalence
classes of such fields. Clearly, the problem is to find its cardinality
$\left\vert R_{n}\right\vert $.

\begin{proposition}
Let $\Delta_{n}$ be the set of maximal cliques of graph $\overline{\Gamma_{n}%
}$ and the group $G_{n}=\mathbb{Z}_{n}\oplus\mathbb{Z}_{2}^{n}$ is acting on
it as described. Let $\Delta_{n}|_{G_{n}}$ be the orbit space under this
action. Then%
\[
\left\vert \Delta_{n}|_{G_{n}}\right\vert =\left\vert R_{n}\right\vert .
\]

\end{proposition}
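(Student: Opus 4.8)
The plan is to show that the two sets $\Delta_n|_{G_n}$ and $R_n$ are in natural bijection by interpreting both sides through the diagram correspondence established in Theorem~\ref{t1}. First I would recall that Theorem~\ref{t1} gives a canonical bijection $\Phi\colon\Delta_n\to D_n$ sending a maximal clique to the diagram whose touching lines are exactly the lines coded by the vertices of that clique. The essential point is that this bijection is equivariant for the $G_n$-actions once we understand how $G_n$ acts on diagrams: the $\mathbb{Z}_n$-factor rotates the regular $n$-gon of nodes by $2\pi/n$, and the $\mathbb{Z}_2^n$-factor acts by the reflections of $\mathbb{B}^2$ across the $n$ axes of symmetry of the $n$-gon (those through opposite nodes and those through opposite edge-midpoints). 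Each such rotation or reflection is a diffeomorphism of $\mathbb{B}^2$, so it carries flow lines of a field to flow lines of another field, hence acts on $D_n$; and because these maps permute the fixed node set $A$ (they do not fix it pointwise, but they do preserve it as a set), the induced action on the combinatorial data is exactly the action on triples $(a,p,q)$ described in the previous section. Thus $\Phi$ intertwines the $G_n$-action on $\Delta_n$ with this geometric action on $D_n$, and it descends to a bijection $\overline{\Phi}\colon\Delta_n|_{G_n}\to D_n|_{G_n}$ on orbit spaces.

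The second half of the argument is to identify $D_n|_{G_n}$ with $R_n$. Recall $D_n=\widetilde{\mathcal V}_n$ is the quotient of $\mathcal V_n$ by orientation-preserving diffeomorphisms fixing $A$ pointwise, while $R_n$ is the quotient of the same fields by all diffeomorphisms (orientation allowed to reverse), with no constraint on $A$. I would argue that two diagrams in $D_n$ lie in the same $G_n$-orbit if and only if the corresponding fields are topologically equivalent. One direction is immediate: every generator of $G_n$ is realized by a genuine diffeomorphism of $\mathbb{B}^2$ (a rotation or a reflection), so $G_n$-equivalent diagrams are topologically equivalent. For the converse, suppose $\varphi\colon\mathbb{B}^2\to\mathbb{B}^2$ is a diffeomorphism taking the flow of $v_1$ to the flow of $v_2$. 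Then $\varphi$ maps the node set of $v_1$ (the $a$- and $b$-nodes, intrinsically determined by the flow) bijectively onto the node set of $v_2$; after post-composing with a rotation and possibly a reflection — i.e. with an element of $G_n$ — we may assume $\varphi$ maps $A$ to $A$ preserving the cyclic (or anticyclic) order and fixing, say, node $1$. Then $\varphi|_A$ differs from the identity only in that it may reverse orientation; composing further with the reflection fixing node $1$ and node $1+\tfrac n2$ (an element of $\mathbb{Z}_2^n$) if necessary, we reduce to the case $\varphi|_A=\mathrm{id}$ and $\varphi$ orientation-preserving, which is precisely the equivalence defining $D_n$. Hence the two fields give the same element of $D_n$, so $v_1$ and $v_2$ lie in the same $G_n$-orbit of diagrams. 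Combining, $D_n|_{G_n}=R_n$, and with the bijection $\overline{\Phi}$ above this yields $|\Delta_n|_{G_n}|=|R_n|$.

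The step I expect to be the main obstacle is the converse direction just sketched: rigorously showing that an arbitrary topological equivalence $\varphi$ can be normalized, by composing with an element of $G_n$, into an equivalence of the restricted type defining $D_n$. This requires knowing that the $a$- and $b$-nodes of a field are recovered intrinsically from its flow (so $\varphi$ must permute them), that $\varphi$ restricted to $\mathbb{S}^1$ is — up to isotopy through flow-preserving maps — determined by its action on the finite node set, and that the only obstruction to having $\varphi|_A=\mathrm{id}$ after rotating is a possible orientation reversal, which one reflection absorbs. Each of these points is geometrically clear from the local normal form of the flow near an $a$- or $b$-node and from the fact that a flow-preserving diffeomorphism of $\mathbb{B}^2$ fixing $A$ pointwise and the orientation is isotopic, through such maps, to the identity (a flow-box / Morse-type argument); making this fully rigorous is the technical heart. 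The remaining ingredients — equivariance of $\Phi$ and realizability of each $G_n$-generator by a diffeomorphism — are routine given the explicit descriptions in Sections~5 and~6.
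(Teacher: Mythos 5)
Your argument is correct, and it is essentially the argument the paper intends: the paper states this proposition without a written proof, and your route — the $G_n$-equivariance of the clique--diagram bijection from Theorem~\ref{t1}, plus normalizing an arbitrary topological equivalence $\varphi$ by a rotation/reflection so that $\varphi|_{A}=\mathrm{id}$ and $\varphi$ preserves orientation (automatic once three or more nodes are fixed) — is exactly what the surrounding discussion of the $G_n$-action is set up to deliver. The only point worth making explicit, which you leave tacit, is surjectivity of $D_n\rightarrow R_n$ (every topological class contains a field whose nodes sit at the prescribed set $A$, a routine isotopy along $\mathbb{S}^1$); by contrast, the isotopy-to-identity statement you flag as the ``technical heart'' is not actually needed, since the equivalence defining $D_n$ only requires the existence of a flow-carrying diffeomorphism fixing $A$, not its isotopy class.
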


\begin{remark}
If in the definition of equivalent fields we restrict ourselves to orientation
preserving diffeomorphisms, it suffices to consider only the $\mathbb{Z}_{n}%
$-action on $\Delta_{n}$. Namely, if $R_{n}^{0}\subset R_{n}$ are the
equivalence classes defined by such diffeomorphisms, then%
\[
\left\vert \Delta_{n}|_{\mathbb{Z}_{n}}\right\vert =\left\vert R_{n}%
^{0}\right\vert .
\]

\end{remark}

The above proposition provides us with some \textquotedblleft brute
force\textquotedblright\ algorithm for finding $\left\vert R_{n}\right\vert $
and $\left\vert R_{n}^{0}\right\vert $.

Obviously, $\left\vert R_{4}\right\vert =\left\vert R_{4}^{0}\right\vert =1$.
For $n=6$ things are equally easy; see \hyperref[f14]{Fig.\ref*{f14}} to
justify that $\left\vert R_{6}\right\vert =4$ , $\left\vert R_{6}%
^{0}\right\vert =6$.

\begin{figure}[ptb]
\includegraphics[width=120mm]{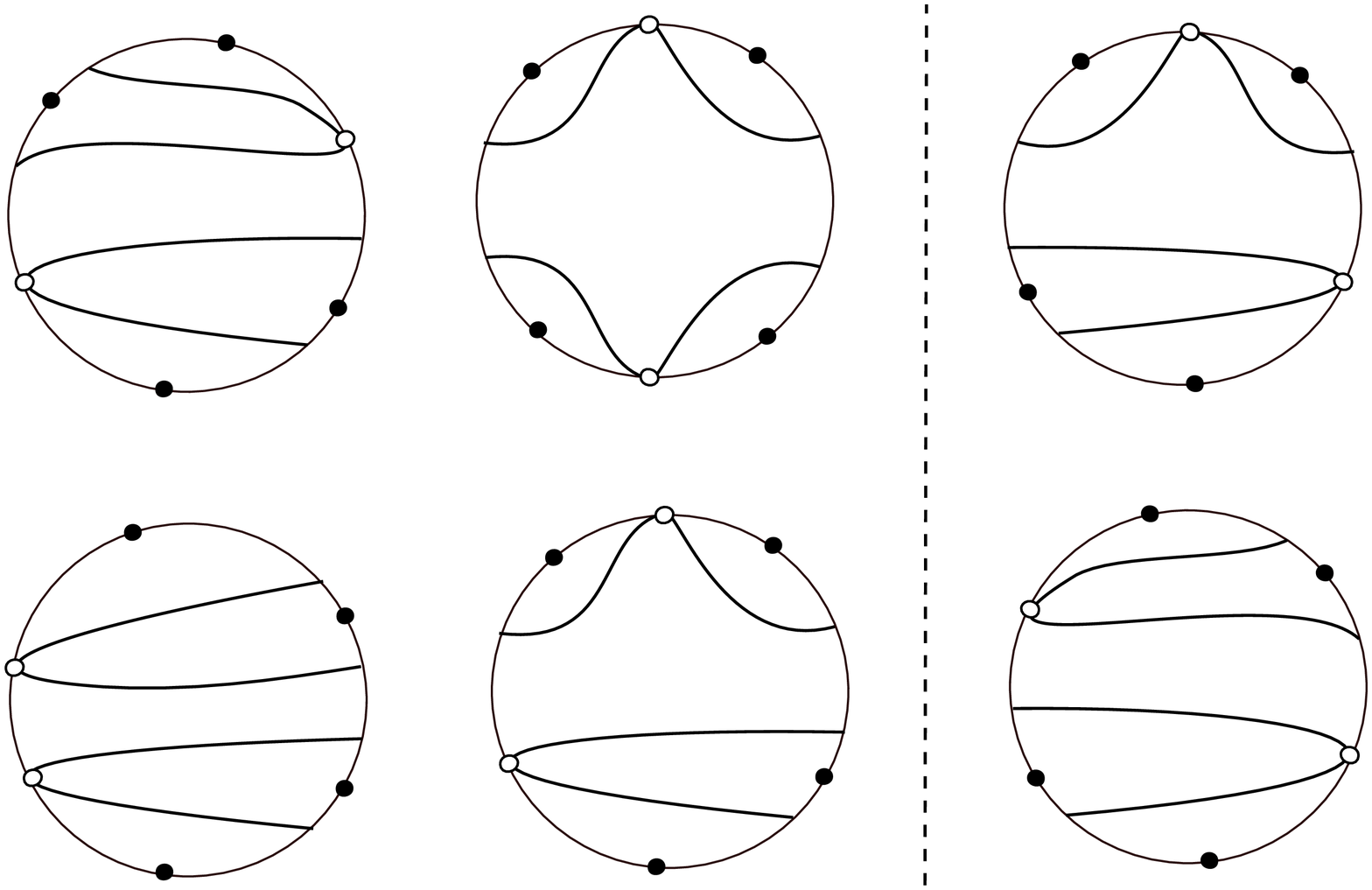}\caption{$\left\vert R_{6}\right\vert
=4$ , $\left\vert R_{6}^{0}\right\vert =6$.}%
\label{f14}%
\end{figure}

It is clear that the above problem has a local variant:

Given some maximal clique $C\in\Delta_{n}$ (= diagram from $D_{n}$), then what
is the cardinality of its orbit $\left\{  gC~|~g\in G_{n}\right\}  $?

\section{Final remarks}

Let us finally list some natural problems, that might trace a route for
further investigations.

1) Find the number of topological diagrams $\left\vert R_{n}\right\vert $ and
$\left\vert R_{n}^{0}\right\vert $ (see Section 6). Find an algorithm for
generating these diagrams.

2) Find some \textquotedblleft good\textquotedblright\ algorithm for
generating diagrams from $D_{n}$ based on the high symmetry of graph
$\overline{\Gamma_{n}}$.

3) Enlarge the class of vector fields under consideration by admitting
multiple touches between a flow line and the boundary.

4) Consider the class of vector fields with non degenerate singularities
inside the disk and solve appropriate calculation problems in this class.


\begin{thebibliography}{9}                                                                                                %


\bibitem {b1}Stanley, Richard P., Catalan numbers. Cambridge University Press, 2015.

\bibitem {b2}Koshy, Thomas, Catalan Numbers with Applications, Oxford
University Press, 2008.

\bibitem {b3}http://mathworld.wolfram.com/CatalanNumber.html

\bibitem {b4}https://oeis.org/A275607

\bibitem {b5}Bron, Coen; Kerbosch, Joep, "Algorithm 457: finding all cliques
of an undirected graph", Commun. ACM, ACM, 16 (9), 1973, 575--577.
\end{thebibliography}
\end{document}